\newtheorem{thm}{Theorem}[section]
\newtheorem{cor}[thm]{Corollary}
\newtheorem{lem}[thm]{Lemma}
\newtheorem{thmx}{Theorem}
\theoremstyle{remark}
\newcommand{\ds}{\displaystyle}
\newcommand{\dint}{\ds \int}
\newcommand{\eqskip}{ \vspace*{2mm}\\ }
\newcommand{\fr}[2]{\frac{\ds #1}{\ds #2}}
\newcommand{\so}{{\rm o}}
\title[Extremal eigenvalues of the biharmonic operator on rectangles]
{Extremal eigenvalues of the Dirichlet biharmonic operator on rectangles}
\author{D.\ Buoso and P.\ Freitas}
\address{
\'Ecole Polytechnique F\'ed\'eral de Lausanne, EPFL SB MATH, SCI-SB-JS, MA B3 514 (B\^atiment MA), Station 8, CH-1015 Lausanne, Switzerland}
\email{davide.buoso@epfl.ch}
\address{Departamento de Matem\'atica, Instituto Superior T\'ecnico, Universidade de Lisboa, Av. Rovisco Pais 1,
P-1049-001 Lisboa, Portugal {\rm and}
Grupo de F\'isica Matem\'{a}tica, Faculdade de Ci\^encias, Universidade de Lisboa,
Campo Grande, Edif\'icio C6, P-1749-016 Lisboa, Portugal}
\email{psfreitas@fc.ul.pt}
\keywords{Biharmonic operator, shape optimisation, rectangles, eigenvalues, isoperimetric inequality.}
\subjclass[2010]{\text{Primary 35J30. Secondary 35P15, 49R50, 74K20}}
\begin{document}

\begin{abstract}
We study the behaviour of extremal eigenvalues of the Dirichlet biharmonic operator over rectangles with a given
fixed area. We begin by proving that the principal eigenvalue is minimal for a rectangle for which the ratio
between the longest and the shortest side lengths does not exceed $1.066459$. We then consider the sequence formed
by the minimal $k^{\rm th}$ eigenvalue and show that the corresponding sequence of minimising rectangles
converges to the square as $k$ goes to infinity.
\end{abstract}

\maketitle

\section{Introduction}

The asymptotic behaviour of extremal eigenvalues of the Laplace operator has received some attention in
the mathematical literature in recent years, starting with the proof that in the case of rectangles with fixed area
and Dirichlet boundary conditions extremal rectangles converge to the square as the order of the eigenvalue goes to
infinity~\cite{antfre}. This result has been generalised to rectangular parallelepipeds in higher dimensions and
to Neumann boundary conditions~\cite{berbucgit,bergit,gitlar,mars}. All these results are based on the relation
between this eigenvalue problem and lattice point problems, and some generalisations along these lines have also
began to appear~\cite{arilau,lauliu,marste}. 

A natural question is, of course, whether or not such results also extend to more general domains. That the problem in
the most general case of bounded domains is expected to be difficult is a consequence of the result by Colbois and El Soufi
which relates this to a statement equivalent to P\'{o}lya's conjecture~\cite{colsou}. There are, however,
some results under weaker conditions. By imposing a surface area restriction instead of a volume restriction, it is
possible to show that in the planar case there is convergence of extremal domains to the disk~\cite{bucfre} and, by
considering averages instead of single eigenvalues, it then becomes possible to show convergence of such averages
or even, in some cases, of the corresponding extremal domains~\cite{fre,lars}.

From a physical perspective, this type of problem may be seen as that of finding the shape for which the number of
modes allowed below a given frequency is extremal. Then, the existing results and corresponding proofs indicate
that in the high-frequency regime this behaviour is again determined by the classical geometric isoperimetric inequality,
just as in the case of the extremal domain for the first eigenvalue. In some sense this is not unexpected,
as the first two terms in the Weyl law depend only on the volume and surface measures. On the other hand, it is not
clear that these two terms should play the dominant role in this setting. Furthermore, it was recently shown
that, in the case of Robin boundary conditions with a positive boundary parameter, eigenvalues satisfy (nontrivial) P\'{o}lya-type
inequalities with lower bounds for the $k^{\rm th}$ eigenvalue, in spite of the fact that the first two terms in
the corresponding Weyl law for Robin eigenvalues are the same as those of the Neumann problem -- see~\cite{anfrke,freken}.

In this paper we are interested in studying the above mentioned problem in the case of the biharmonic operator with Dirichlet
boundary conditions. Determining extremal domains for the biharmonic operator with Dirichlet boundary conditions is
a notoriously difficult problem, as may be seen from the fact that proofs of the corresponding Faber-Krahn inequality
exist only in two and three spatial dimensions~\cite{ashben,nad} -- see also \cite{ashlau} for a discussion about the limitations
arising in higher dimensions. Part of this difficulty stems from the fact that
the first eigenfunction is no longer necessarily of one sign.
In the case of rectangles this becomes particularly relevant as it is known that the first eigenfunction does indeed
change sign, including in the case of the square which is the natural candidate for the minimiser of the first eigenvalue
(see e.g., \cite{dav, coff,kkm}). Indeed, it is not known if the square minimises
the first Dirichlet eigenvalue of the biharmonic operator among all rectangles of a given area. Our first result is that there exists
one global minimiser for this eigenvalue
and that it must be quite close to the square. More precisely, we have the following
\begin{thmx}
 There exists a global minimiser for the first Dirichlet eigenvalue of the biharmonic operator over rectangles of
 fixed area. Furthermore, the quotient between the lengths of the largest and the smallest sides of the extremal rectangle
 does not exceed $1.066459$. 
\end{thmx}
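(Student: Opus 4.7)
By the scaling homogeneity $\Lambda(sR)=s^{-4}\Lambda(R)$, I may restrict to area-one rectangles, parametrised as $R_t=(0,t)\times(0,1/t)$ with $t\ge1$. Denote by $\Lambda(t)$ the first Dirichlet biharmonic eigenvalue of $R_t$; the side ratio is then $t^2$, so the claim becomes that the infimum of $\Lambda$ on $[1,\infty)$ is attained at some $t^\circ$ with $(t^\circ)^2\le1.066459$.

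For \emph{existence}, continuity of $\Lambda(\cdot)$ on $[1,\infty)$ is routine: the area-preserving change of variables $(x,y)\mapsto(x/t,ty)$ sends $R_t$ to the unit square and pulls the Rayleigh quotient back to $\int(t^{-2}u_{\xi\xi}+t^{2}u_{\eta\eta})^2\,d\xi d\eta/\int u^2$, which depends continuously on $t$ through its coefficients. Coercivity $\Lambda(t)\to\infty$ as $t\to\infty$ follows from the inequality $\Lambda(R)\ge\lambda_1(R)^2$, where $\lambda_1$ is the first Dirichlet-Laplace eigenvalue; this is obtained by combining Poincaré with $\|\nabla u\|^2=-\int u\,\Delta u\le\|u\|\|\Delta u\|$, and on $R_t$ it yields $\Lambda(t)\ge\pi^4(t^2+t^{-2})^2\to\infty$. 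Hence $\Lambda$ attains its infimum on $[1,\infty)$.

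For the \emph{ratio bound}, since the unit square is admissible I would close the argument by producing an explicit upper bound $U\ge\Lambda(1)$ and an explicit lower bound $L(t)\le\Lambda(t)$, and then verifying that $L(t)>U$ whenever $t^2>1.066459$. For $U$, test the Rayleigh quotient against the tensor product $u_0(x,y)=f(x)f(y)$ of first clamped-beam eigenfunctions on $(0,1)$: a direct calculation via $\int_R(\Delta u_0)^2=2\beta^4\|f\|^4+2\|f'\|^4$ (with $\beta\approx4.730$ the clamped-beam frequency) gives $U=2\beta^4+2(\|f'\|^2/\|f\|^2)^2$ in closed form, and, if this is too loose, I would refine via Rayleigh-Ritz in a tensor basis with rigorous truncation control. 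For $L(t)$, integrate by parts to write $\int_R(\Delta u)^2=\int_R u_{xx}^2+2\int_R u_{xy}^2+\int_R u_{yy}^2$ (valid since $u\in H^2_0$), then apply slice-by-slice the clamped-beam inequality $\int u_{xx}^2\,dx\ge(\beta^4/t^4)\int u^2\,dx$ in the $x$ direction and the analogous one in $y$, together with iterated Poincaré on the mixed term, to obtain a bound of the form $L(t)\ge\beta^4(t^4+t^{-4})+2\pi^4$ (or a sharper variant). Solving $L(t)=U$ and verifying that the positive root satisfies $t^2\le1.066459$ finishes the proof.

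The \emph{main obstacle} is quantitative calibration. Because the claimed threshold $1.066459$ for the side ratio is extremely close to $1$, both $L$ and $U$ must track $\Lambda(1)$ to within a very few per cent. The naive versions above are not tight enough on their own: the basic lower bound falls roughly ten per cent below the true $\Lambda(1)$ at $t=1$, and the clamped-beam product trial overshoots from above, so their crossover in $t^2$ lies well above $1.1$. Shrinking the gap — either by enriching the trial space for $U$ with verified Rayleigh-Ritz error bars, or by sharpening the mixed-partial estimate for $L$ using that $u$ satisfies clamped rather than merely hinged conditions — is the technical heart of the argument; once both sides are tight enough that the crossover falls at $1.066459$, the rest is direct.
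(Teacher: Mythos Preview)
Your overall architecture --- existence via continuity and coercivity, then an upper bound at the square crossed against a monotone lower bound in $t$ --- is exactly the paper's strategy, and your existence argument is fine. You have also correctly diagnosed the obstacle: the elementary lower bound $\beta^4(t^4+t^{-4})+2\pi^4$ that you derive is precisely the paper's Theorem~3.5, and it is indeed too weak. Even paired with the tight certified value $\lambda_1(1)\le 1294.933988$ (which is what ``Rayleigh--Ritz with rigorous truncation control'' produces --- this is Wieners' bound, and the paper simply quotes it), the crossover of $\beta^4(t^4+t^{-4})+2\pi^4=1294.934$ occurs only at $t^2\approx 1.25$, far above $1.066459$.

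The missing ingredient is not a sharper treatment of the mixed-partial term, as you suggest, but a qualitatively different lower bound due to Owen. Instead of decoupling $\int u_{xx}^2$, $\int u_{yy}^2$, $\int u_{xy}^2$ and bounding each by a one-dimensional inequality, Owen keeps the cross-coupling and compares with the first eigenvalue $\rho(\alpha)$ of the \emph{one-dimensional} clamped problem $y''''-2\alpha y''=\lambda y$ on $(0,1)$, obtaining
\[
\lambda_1(a)\;\ge\;L(a)\;=\;\rho(\pi^2 a^4)\,a^{-4}+\rho(\pi^2 a^{-4})\,a^4-2\pi^4.
\]
The paper then proves $L$ is strictly increasing for $a>1$ (via a variational computation of $\rho'$ and Poincar\'e), and solves $L(a)=1294.933988$ numerically by bisection to locate $\hat a\in[1.03269,1.032695)$, whence the side ratio $\hat a^{2}\le 1.066459$. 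So the step you flag as ``the technical heart'' is not a matter of tightening constants in your inequalities; it requires importing Owen's coupled estimate, which recovers almost all of the $\sim 8\%$ deficit at $t=1$ that your decoupled bound leaves behind.
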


At the high-frequency end of the spectrum, and since the proofs now do not rely on any such properties, we are
able to show that there is convergence to the square.
\begin{thmx}
 Let $q_{k}^{*}$ denote the quotient between the lengths of the largest and the smallest sides of a rectangle minimising
 the $k^{\rm th}$ eigenvalue of the Drichlet biharmonic operator over rectangles of fixed area. Then
 \[
  \lim_{k\to\infty} q_{k}^{*} = 1.
 \]
\end{thmx}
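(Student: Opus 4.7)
The plan is to sandwich the $k$-th Dirichlet biharmonic eigenvalue of a rectangle between its Navier counterpart (which separates) from below and a trial-function bound from above, and then feed the result into the Antunes--Freitas convergence result for the Dirichlet Laplacian on rectangles \cite{antfre}. Existence of a minimiser $R_k^*$ follows from the standard continuity of $\lambda_k^D$ in the rectangle's side lengths and from $\lambda_k^D\ge \lambda_1^D$ blowing up on degenerate shapes; write $q_k^*$ for its aspect ratio.

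The lower half of the sandwich is immediate: from $H_0^2\subset H^2\cap H_0^1$ one has
\[
\lambda_k^D(R)\;\ge\;\lambda_k^N(R)\;=\;\bigl(\mu_k^\Delta(R)\bigr)^{2},
\]
the last equality holding because the Navier problem on a rectangle is solved by the separated eigenfunctions $\sin(m\pi x/a)\sin(n\pi y/b)$. For the upper half, I construct trial functions in $H_0^2(R)$ of the form $\phi_{m,n}(x,y)=\xi_m(x)\xi_n(y)$, taking the one-dimensional factor $\xi_m\in H_0^2(0,L)$ so that $\xi_m''\approx -(m\pi/L)^2\xi_m$ in $L^2$ -- for instance $\xi_m(x)=\sin(m\pi x/L)\,x(L-x)$, or a clamped beam eigenfunction. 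The Rayleigh quotient factorises as
\[
\mathcal{R}(\phi_{m,n})=\frac{\int(\xi_m'')^2}{\int\xi_m^{2}}+\frac{\int(\xi_n'')^2}{\int\xi_n^{2}}+2\,\frac{\int\xi_m\xi_m''}{\int\xi_m^{2}}\cdot\frac{\int\xi_n\xi_n''}{\int\xi_n^{2}},
\]
which approaches $\pi^{4}(m^2/a^2+n^2/b^2)^{2}$ as $m,n\to\infty$, exactly the corresponding Navier eigenvalue. A min--max over $k$-dimensional subspaces spanned by such tensor products then yields $\lambda_k^D(R)\le(1+\eta_k)\,\lambda_k^N(R)$ with $\eta_k\to0$.

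Combining the sandwich at $R_k^*$ and at the square $S$ gives
\[
\lambda_k^N(R_k^*)\;\le\;\lambda_k^D(R_k^*)\;\le\;\lambda_k^D(S)\;\le\;(1+\eta_k)\,\lambda_k^N(S),
\]
so $R_k^*$ is asymptotically a near-minimiser of $\lambda_k^N=(\mu_k^\Delta)^{2}$, and hence of the $k$-th Dirichlet Laplacian eigenvalue on area-fixed rectangles. The Antunes--Freitas theorem asserts that the exact minimiser of $\mu_k^\Delta$ has aspect ratio tending to $1$; a quantitative (stability) form of the same lattice-point argument then forces $q_k^*\to1$. A priori boundedness of $q_k^*$ (ruling out a subsequence drifting to infinity) is handled separately by comparing the first Navier eigenvalue $\pi^{4}(q+q^{-1})^{2}$ against the crude upper bound $\lambda_k^D(S)=O(k^{2})$. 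The main technical obstacle is producing a sharp enough error $\eta_k$ in the trial-function upper bound -- one that improves on the leading Weyl term by enough to be read against the second-order gap in $\lambda_k^N$ separating the square from non-square rectangles; the secondary difficulty is packaging the Antunes--Freitas analysis in stability form, upgrading convergence of exact minimisers to convergence of approximate ones.
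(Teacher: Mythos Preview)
Your lower bound is exactly the one the paper uses: the Navier comparison $\lambda_k^D(R)\ge\bigl(\mu_k^\Delta(R)\bigr)^2$, which is then fed into the Antunes--Freitas lattice-point inequality for the Dirichlet Laplacian. The divergence is entirely on the upper-bound side, and there your plan has a structural gap rather than a merely technical one.

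You want trial functions to produce $\lambda_k^D(S)\le(1+\eta_k)\lambda_k^N(S)$ with $\eta_k$ small enough that the resulting inequality $\mu_k^\Delta(R_k^*)\le\sqrt{1+\eta_k}\,\mu_k^\Delta(S)$ forces $R_k^*$ to be a near-minimiser of $\mu_k^\Delta$ at a scale finer than the perimeter term. But the two-term Weyl law for the Dirichlet biharmonic (formula~\eqref{weyl22} in the paper) has second-order coefficient $2\sqrt{\pi}\,|\partial\Omega|\bigl(1+\Gamma(3/4)/(\sqrt{\pi}\,\Gamma(5/4))\bigr)$, strictly larger than the corresponding coefficient $2\sqrt{\pi}\,|\partial\Omega|$ for $\mu_k^\Delta=\bigl(\lambda_k^N\bigr)^{1/2}$. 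Hence $\lambda_k^D(S)-\lambda_k^N(S)\asymp k^{3/2}$, and no choice of trial functions can beat the truth: $\eta_k$ is genuinely of order $k^{-1/2}$, not $o(k^{-1/2})$. Since the gap $\lambda_k^N(R_a)-\lambda_k^N(S)$ for $a\neq 1$ is also of order $k^{3/2}$ (namely $4\sqrt{\pi}\bigl[(a+a^{-1})-2\bigr]k^{1/2}$ at the level of $\mu_k^\Delta$), your sandwich only yields $a_k^*+1/a_k^*\le 2+O(1)$, i.e.\ boundedness, and any stability form of Antunes--Freitas would need an error that is $o(k^{1/2})$ in $\mu_k^\Delta$, which you do not have. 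The ``main technical obstacle'' you flag is therefore not a matter of sharpening estimates but a barrier imposed by the spectral asymptotics themselves.

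The paper sidesteps this by \emph{not} reducing the upper bound to the Navier problem. It takes the two-term Weyl expansion~\eqref{weyl22} for the Dirichlet biharmonic on the square as the upper bound for $\lambda_k^*$, and on the lower side transfers the Antunes--Freitas lattice inequality from $\mu_k^\Delta$ to $\lambda_k^{1/2}$ via the monotonicity of $t\mapsto t-2(a+a^{-1})\sqrt{t}$ together with $\lambda_k^{1/2}\ge\mu_k^\Delta$. This gives an inequality of the form
\[
\lambda_k^{1/2}(a)\ \ge\ 4\pi k + 2\Bigl(a+\tfrac{1}{a}\Bigr)\lambda_k^{1/4}(a) - C\lambda_k^{\theta/4}(a) - 3\pi,
\]
which is then played directly against the Weyl asymptotics at $a=1$, following the endgame of~\cite{antfre}. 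So the Navier comparison appears only once (for the lower bound), and the upper bound keeps the genuine biharmonic boundary coefficient rather than losing it to a Navier sandwich.
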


Some other results in the spirit of those for the Dirichlet Laplacian mentioned above are more or less straighfrorward consequences of
the corresponding original result. They include the case of fixed perimeter and the Colbois-El Soufi results on the sequence of minimisers.
For completeness, we collect these in Section~\ref{collection}.

\section{Background and notation}

Let $\Omega$ be a (smooth bounded) domain in $\mathbb R^N$, $N\ge2$. The Dirichlet eigenvalue problem for the biharmonic operator (clamped plate problem)
is given by
\begin{equation}
\label{dirichlet}
\left\{\begin{array}{ll}
\Delta^2u=\lambda u, & {\rm \ in\ }\Omega,\\
u=\fr{\partial u}{\partial \nu}=0, & {\rm \ on\ }\partial \Omega,
\end{array}\right.
\end{equation}
with the corresponding weak formulation being
\begin{equation*}
\int_{\Omega}\Delta u\Delta\phi=\lambda\int_\Omega u\phi,\ \forall \phi\in H^2_0(\Omega).
\end{equation*}
The eigenvalues of the above problem may be characterised by a variational principle of the form
\begin{equation*}
\lambda_k(\Omega)=\min_{0\neq u\in V\subset H^2_0(\Omega)}\max_{\dim V=k}\fr{\int_\Omega (\Delta u)^2}{\int_\Omega u^2}
\end{equation*}
and it is known that the sequence $\lambda_{k}$ satisfies
\[
 0<\lambda_{1} \leq \lambda_{2} \leq \cdots \to +\infty \mbox{ as } k\to \infty.
\]
Under certain geometric conditions on a piecewise smooth domain $\Omega$, which are satisfied by rectangles (cf.\ \cite{vass}), the corresponding Weyl asymptotics for
problem~\eqref{dirichlet} on planar domains may be seen from~\cite[formulas (6.2.1) and (6.2.2)]{safvass} to be 
\begin{equation}
\label{weyl2}
\lambda_k=\fr{16\pi^2}{|\Omega|^2}k^2+ \fr{16\pi^{\frac 3 2}|\partial \Omega|}{|\Omega|^{\frac 5 2}}\left(1+\frac{\Gamma\left(\frac34\right)}{\sqrt{\pi}\Gamma\left(\frac54\right)}\right)k^{\frac 3 2} +\so\left(k^{\frac 3 2}\right),
\end{equation}
or equivalently
\begin{equation}
\label{weyl22}
\lambda_k^{\frac 1 2}=\fr{4\pi}{|\Omega|}k+ \fr{2\pi^{\frac 1 2}|\partial \Omega|}{|\Omega|^{\frac 3 2}}\left(1+\frac{\Gamma\left(\frac34\right)}{\sqrt{\pi}\Gamma\left(\frac54\right)}\right)k^{\frac 1 2} +\so\left(k^{\frac 1 2}\right),
\end{equation}
where $|\Omega|$ and $|\partial\Omega|$ denote the $2$-dimensional measure of $\Omega$ and
the $1$-dimensional measure of its boundary $\partial\Omega$, respectively.

As in the case of the Dirichlet Laplacian, it is also possible to obtain lower bounds of Li-Yau type and we have that the following holds for general smooth domains
(see \cite[formula (1.9)]{kukutan})
\begin{equation}
\label{kukutang}
\lambda_k\ge \fr {16 N\pi^4} {N+4}\left(\fr k{\omega_N|\Omega|}\right)^{\frac 4 N}.
\end{equation}

\section{The first eigenvalue: the square is (almost) the minimising rectangle}

In this section, we focus our attention on the question of determining the minimal possible value for the first eigenvalue of problem~\eqref{dirichlet} among 
rectangles with a given fixed area. Without loss of generality, we fix the area to be one so that our class of admissible rectangles may be written as
$$
\mathcal{R}=\{\text{Rectangles with side lengths $a$ and $1/a$, for $a\in[1,+\infty)$}\}.
$$

We recall that the biharmonic operator appearing in problem~\eqref{dirichlet} is invariant under rotations and translations as is
the case for the Laplace operator, and hence the spectrum is the same for any rectangle of given edges.  Thus, and due to symmetry
considerations, we expect the square to be an extremal point for $\lambda_{1}$, for otherwise there would have to be an infinite number of oscillations
for $a$ close to one. However, most other fundamental properties of the Dirichlet
Laplacian are not shared by the biharmonic operator. For instance, we know that the first biharmonic eigenvalue is not necessarily
simple in general and, although the first eigenvalue of the square is expected to be simple, the only results in this direction are of a numerical
nature~\cite{wiener}.  Furthermore, some useful properties such as separation of variables are not available for rectangles, and therefore we cannot 
characterize either its eigenvalues or eigenfnctions explicitly in terms of known functions. This, together with the lack of positivity for the first eigenfunction
already mentioned in the Introduction, transforms what is a trivial problem in the case of the Laplacian into a quite hard problem. 

Our approach will make use of the sharp estimates provided by Owen~\cite{owen} in order to narrow down the search of the minimiser
to a small neighbourhood of the square $(a=1)$.

Let us denote by $R_a$ the rectangle $R_a=[0,a]\times[0,1/a]$, and write $\lambda_1(a)=\lambda_1(R_a).$
We recall the following estimate from~\cite[Table 4]{wiener}.
\begin{lem}
The first eigenvalue of problem \eqref{dirichlet} satisfies
$$
1294.933940\le\lambda_1(1)\le \Lambda:=1294.933988.
$$
\end{lem}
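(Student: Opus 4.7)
This statement records a verified numerical enclosure for $\lambda_1(R_1)$, so the plan is to reproduce the two-sided computer-assisted bounds of~\cite{wiener} rather than derive them by hand. The upper and lower bounds rely on essentially different mechanisms, and will be treated separately.

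For the upper bound $\lambda_1(1)\leq\Lambda$ the natural tool is Rayleigh--Ritz on a conforming subspace of $H^2_0(R_1)$. A convenient basis is the tensor product of clamped-beam eigenfunctions on $[0,1]$, i.e.\ products $\varphi_m(x)\varphi_n(y)$ with $\varphi_m''''=\mu_m\varphi_m$ and $\varphi_m(0)=\varphi_m'(0)=\varphi_m(1)=\varphi_m'(1)=0$. These respect the clamped boundary data exactly and give stiffness and mass matrices whose entries reduce to explicit one-dimensional integrals of hyperbolic-trigonometric functions. Diagonalising the resulting symmetric generalised eigenproblem produces upper bounds for $\lambda_1(1)$, and enlarging the basis drives the first Ritz value monotonically down towards $\lambda_1(1)$.

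Upper bounds alone are insufficient, so for the lower bound the plan is to invoke a Lehmann--Goerisch-type inclusion. The ingredients are (i) a rigorous a priori separation constant $\rho\in(\lambda_1(1),\lambda_2(1))$, obtainable from a coarse Ritz computation on a reduced symmetry subspace together with a Weinstein-type lower estimate for $\lambda_2(1)$; (ii) the Ritz approximation $(\tilde\lambda,\tilde u)$ produced in the previous step; and (iii) an auxiliary function $w\in H^2_0(R_1)$ weakly solving $\Delta^2 w=\tilde u$, which may be approximated in the same finite-dimensional space. Lehmann's inequality then yields a certified lower bound whose deficit with respect to $\tilde\lambda$ is quadratic in the residual $\|\Delta^2\tilde u-\tilde\lambda\tilde u\|$, so refining the Ritz space sharpens both sides symmetrically.

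The main obstacle is not the abstract scheme but executing it in rigorous interval arithmetic at sufficient precision to resolve seven significant digits: every matrix entry, every eigenvalue of the Ritz problem, and the residual norm appearing in Lehmann's inequality must be enclosed tightly enough that the final certified interval for $\lambda_1(1)$ lies inside $[1294.933940,1294.933988]$. This verified computation is the content of~\cite[Table 4]{wiener}, from which the lemma follows.
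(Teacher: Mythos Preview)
Your proposal is correct and in fact goes well beyond what the paper does: the paper offers no proof of this lemma at all, simply recording the enclosure as a quotation from~\cite[Table~4]{wiener}. Your sketch of Rayleigh--Ritz for the upper bound and a Lehmann--Goerisch inclusion for the lower bound, carried out in interval arithmetic, accurately describes the machinery behind Wieners' verified computation, so the two treatments are consistent---yours is just an unpacking of the citation.
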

We also recall a lower bound from~\cite[Theorem 2]{owen}. 
\begin{lem}
For any $a\ge 1$ we have
\begin{equation}
\label{La}
\lambda_1(a)\ge L(a)=\rho(\pi^2 a^4)a^{-4}+\rho(\pi^2 a^{-4})a^4-2 \pi^4,
\end{equation}
where $\rho(\alpha)$ is the first eigenvalue of the following problem
\begin{equation}
\label{prob1d}
\left\{\begin{array}{ll}
y''''-2\alpha y''=\lambda y, & \text{in\ }(0,1),\\
y(0)=y(1)=y'(0)=y'(1)=0,
\end{array}\right.
\end{equation}
and $\rho(\alpha)$ is an increasing function for positive $\alpha$.
\end{lem}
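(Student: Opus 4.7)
My plan is to rescale the rectangle to the unit square, reduce via Fubini slicing to two copies of the one-dimensional problem \eqref{prob1d}, and close the resulting gap with a mixed-derivative inequality.

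Setting $g(\xi,\eta)=u(a\xi,\eta/a)$, so that $g\in H^2_0((0,1)^2)$, and using the identity $\int u_{xx}u_{yy}=\int u_{xy}^2$ valid under clamped boundary conditions, the Rayleigh quotient for $\lambda_1(a)$ takes the form
\[
\lambda_1(a)=\min_{0\ne g\in H^2_0((0,1)^2)}\frac{a^{-4}\!\int g_{\xi\xi}^2+2\!\int g_{\xi\eta}^2+a^4\!\int g_{\eta\eta}^2}{\int g^2}.
\]
For almost every $\eta$ the slice $g(\cdot,\eta)$ lies in $H^2_0((0,1))$, so applying the Rayleigh characterisation of $\rho$ with $\alpha=\pi^2 a^4$, integrating in $\eta$, multiplying by $a^{-4}$, and adding the symmetric estimate in the other variable (parameter $\pi^2 a^{-4}$, factor $a^4$) yields
\[
a^{-4}\!\int g_{\xi\xi}^2+a^4\!\int g_{\eta\eta}^2+2\pi^2\Bigl(\!\int g_\xi^2+\int g_\eta^2\Bigr)\ge\bigl(a^{-4}\rho(\pi^2a^4)+a^4\rho(\pi^2a^{-4})\bigr)\!\int g^2.
\]

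The remaining step, and the one I expect to require the only real thought, is the bridging inequality
\[
2\!\int g_{\xi\eta}^2+2\pi^4\!\int g^2\ge 2\pi^2\Bigl(\!\int g_\xi^2+\int g_\eta^2\Bigr),\qquad g\in H^1_0((0,1)^2).
\]
Combined with the previous display this gives at once $\lambda_1(a)\ge L(a)$, the $-2\pi^4$ in $L(a)$ absorbing the $2\pi^4\!\int g^2$ term. To prove the bridging inequality, expand $g$ in the $L^2$ sine basis, $g=\sum c_{mn}\sin(m\pi\xi)\sin(n\pi\eta)$; Parseval reduces the inequality to $\sum c_{mn}^2(m^2-1)(n^2-1)\ge 0$, which is immediate for $m,n\ge 1$ and sharp at $m=n=1$.

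Finally, monotonicity of $\rho$ is routine: if $\alpha_2>\alpha_1\ge 0$ and $y_0$ is a minimiser at $\alpha_2$, then $\rho(\alpha_2)\ge\rho(\alpha_1)+2(\alpha_2-\alpha_1)\int(y_0')^2/\int y_0^2>\rho(\alpha_1)$, since $\int(y_0')^2>0$ for nontrivial clamped $y_0$. The only small technical point in the main argument is checking that slices of $H^2_0((0,1)^2)$ functions lie in $H^2_0((0,1))$ for a.e.\ $\eta$, which follows from standard Fubini and trace arguments.
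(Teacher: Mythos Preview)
Your argument is correct. The rescaling to the unit square, the slice-wise application of the Rayleigh quotient for $\rho(\alpha)$, the Fourier bridging inequality $\sum c_{mn}^2(m^2-1)(n^2-1)\ge0$, and the monotonicity of $\rho$ all go through exactly as you describe; the only technical point (a.e.\ slices of $H^2_0$ functions lying in $H^2_0$ of the cross-section) can indeed be handled either directly via Fubini and traces or, more simply, by proving the inequality for $C_c^\infty$ functions and passing to the limit.

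The paper itself, however, does not give a proof of this lemma: it is stated as a recall of Owen's bound~\cite[Theorem 2]{owen}, with no argument supplied. So there is nothing to compare your route against within the paper. What you have written is a clean, self-contained derivation of Owen's inequality, and in fact your Fourier-series treatment of the cross term is slightly slicker than the iterated Poincar\'e argument the authors use later (in the proof of their explicit bound $\lambda_1(a)\ge\omega_1^4(a^4+a^{-4})+2\pi^4$), since it gives the sharp constant $2\pi^4$ directly. If anything, your proof could serve as a replacement for the bare citation.
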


Finally, we will also need the following
\begin{lem}
The function $L(a)$ defined in \eqref{La} is strictly increasing in $a$ for $a>1$.
\end{lem}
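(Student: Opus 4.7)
The plan is to compute $L'(a)$ in closed form, reduce the monotonicity to a pointwise spectral inequality for problem~\eqref{prob1d}, and then establish that inequality.

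Let $y_\alpha\in H_0^2(0,1)$ denote the $L^2$-normalized first eigenfunction of \eqref{prob1d} at parameter $\alpha$, and set $E(\alpha):=\int_0^1(y_\alpha'')^2\,dx$. The Hellmann--Feynman identity $\rho'(\alpha)=2\int_0^1(y_\alpha')^2\,dx$, together with the energy identity $\rho(\alpha)=E(\alpha)+2\alpha\int_0^1(y_\alpha')^2\,dx$, makes the chain-rule differentiation of $L$ collapse: the two $\int(y_\cdot')^2$ contributions cancel exactly, and with $\alpha=\pi^2a^4$ and $\beta=\pi^2/a^4$ one arrives at
\[
 L'(a)\;=\;\frac{4}{a^5}\bigl[a^8 E(\beta)-E(\alpha)\bigr].
\]
In particular $L'(1)=0$, consistent with $a=1$ being the symmetric critical point. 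Since $\alpha\beta=\pi^4$ forces $a^8=\alpha/\beta$, for $a>1$ (so $\alpha>\pi^2>\beta$) the sign of $L'(a)$ equals that of $E(\beta)/\beta-E(\alpha)/\alpha$, and the lemma follows from the stronger statement that $\alpha\mapsto E(\alpha)/\alpha$ is strictly decreasing on $(0,\infty)$.

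The two identities above give $E=\rho-\alpha\rho'$, hence $E'(\alpha)=-\alpha\rho''(\alpha)$, so the monotonicity of $E/\alpha$ reduces to the pointwise inequality $E(\alpha)+\alpha^2\rho''(\alpha)>0$. As $\rho$ is the infimum over $y$ of affine functions of $\alpha$ it is concave, so $\rho''\le 0$ and the inequality reads $E(\alpha)>\alpha^2|\rho''(\alpha)|$. To attack this I would use second-order perturbation theory for the self-adjoint family $T_\alpha=D^4-2\alpha D^2$ on $H_0^2(0,1)$, giving
\[
 |\rho''(\alpha)|\;=\;8\sum_{k\ge 2}\frac{\langle y_\alpha',y_k'\rangle^2}{\rho_k(\alpha)-\rho_1(\alpha)},
\]
where $\{y_k\}$ is the $L^2$-orthonormal eigenbasis of $T_\alpha$. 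Integration by parts under the clamped conditions together with Parseval in $L^2(0,1)$ give $\sum_{k\ge 1}\langle y_\alpha',y_k'\rangle^2=\int_0^1(y_\alpha'')^2\,dx=E(\alpha)$, so that the series for $|\rho''(\alpha)|$ is controlled by $E(\alpha)$ divided by an appropriate spectral gap.

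The main obstacle is obtaining a gap-and-overlap estimate strong enough to dominate the factor $\alpha^2$ uniformly in $\alpha$. A naive bound using only $\rho_2(\alpha)-\rho_1(\alpha)$ breaks down for large $\alpha$, since that gap grows only linearly while $\alpha^2$ grows quadratically; one must therefore also exploit the oscillation of $y_k'$ for $k\ge 2$ to show that the overlaps $\langle y_\alpha',y_k'\rangle$ are essentially concentrated in boundary layers of thickness $O(\alpha^{-1/2})$. This refined estimate, derivable from the characteristic equation for \eqref{prob1d}, is where I expect the technical heart of the proof to lie.
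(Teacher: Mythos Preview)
Your derivation of $L'(a)=\tfrac{4}{a^5}\bigl[a^{8}E(\beta)-E(\alpha)\bigr]$ is correct and coincides with the paper's computation (there in the equivalent form $F'(t)=t^{-2}\bigl[t^{2}\int_0^1(v_{t^{-1}}'')^2-\int_0^1(v_t'')^2\bigr]$ with $t=a^{4}$). From that common point on, however, your argument has a genuine gap: you pass to the \emph{stronger} claim that $\alpha\mapsto E(\alpha)/\alpha$ is strictly decreasing on all of $(0,\infty)$, rewrite this as $E(\alpha)>\alpha^{2}|\rho''(\alpha)|$, set up the second-order perturbation series for $\rho''$, and then stop---acknowledging yourself that the naive spectral-gap bound fails for large $\alpha$ and that the ``technical heart'' (a boundary-layer overlap estimate) is still to be supplied. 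Nothing is actually proved beyond the formula for $L'(a)$.

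The paper sidesteps this difficulty entirely by \emph{not} aiming at global monotonicity of $E/\alpha$. It treats the difference $t^{2}\int_0^1(v_{t^{-1}}'')^2-\int_0^1(v_t'')^2$ directly, comparing both terms to the fixed eigenfunction $v_0$ at parameter $0$: on one side $\int_0^1(v_{t^{-1}}'')^2\ge\rho(0)=\int_0^1(v_0'')^2$ trivially; on the other, writing $\int_0^1(v_t'')^2=\rho(\pi^{2}t)-2\pi^{2}t\int_0^1(v_t')^2$ and inserting $v_0$ as a test function in the Rayleigh quotient for $\rho(\pi^{2}t)$ bounds that term from above. Two applications of the Poincar\'e inequality on $(0,1)$ then finish the proof. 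No perturbation theory, no spectral gaps, no asymptotics---only the variational characterization and Poincar\'e. By upgrading to the pointwise statement about $E/\alpha$ you throw away the constraint $\alpha\beta=\pi^{4}$ (equivalently $t\ge 1$) and thereby convert an elementary comparison into a hard problem.
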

\begin{proof}
In order to prove that $L'(a)>0$ for $a>1$, we denote by $v_t$ an eigenfunction associated with $\rho(\pi^2 t)$ in~\eqref{prob1d}
such that $\|v_t\|_{L^2}=1$. Then
\[
 \rho(\pi^2 t) = \int_{0}^{1} (v_{t}'')^2 + 2\pi^2 t \int_{0}^{1} (v_{t}')^2
\]
and
$$
\rho'(\pi^2 t)=2\int_0^1(v_t')^2.
$$
Writing
$$
F(t)=\rho(\pi^2 t)t^{-1}+\rho(\pi^2 t^{-1})t,
$$
we have
$$
\begin{array}{lll}
F'(t) & = & \fr{2\pi^{2}}{t} \dint_{0}^{1}\left(v_t'\right)^2 - \fr{1}{t^2}\left[\dint_{0}^{1} (v_{t}'')^2 + 2\pi^2 t \int_{0}^{1} (v_{t}')^2\right]\eqskip
& & \hspace*{5mm}-\fr{\pi^2}{t}\dint_{0}^{1}(v_{t^{-1}}')^2 + \left[ \int_{0}^{1} (v_{t^{-1}}'')^2 + \fr{2\pi^2}{t} \int_{0}^{1} (v_{t^{-1}}')^2\right] \eqskip
& = & -\fr1{t^2}\dint_0^1(v_t'')^2+\dint_0^1(v_{t^{-1}}'')^2 \eqskip
& = & \fr{1}{t^2}\left[t^2\dint_0^1(v_{t^{-1}}'')^2+2\pi^2t\dint_0^1(v_{t}')^2-\rho(\pi^2t)\right].
\end{array}
$$
At this point we observe that
$$
\int_0^1(v_{t^{-1}}'')^2\ge\min_{\substack{v\in H^2_0 \\ \|v\|_{L^2}=1}}\int_0^1(v'')^2=\rho(0)=\int_0^1(v_{0}'')^2.
$$
Moreover, since
$$
\rho(\pi^2 t)=\min_{\substack{v\in H^2_0 \\ \|v\|_{L^2}=1}}\left[\dint_0^1(v'')^2+2\pi^2t\int_0^1(v')^2\right],
$$
then, also using the Poincar\'e inequality
$$
\int_0^1(v')^2\ge\pi^2\int_0^1v^2\ \ \forall v\in H^1_0(0,1),
$$
we get
\[
\begin{array}{lll}
t^2\dint_0^1(v_{t^{-1}}'')^2+2\pi^2t\dint_0^1(v_{t}')^2-\rho(\pi^2t) & 
\ge &  (t^2-1)\dint_0^1(v_{0}'')^2-2\pi^2t\dint_0^1(v_{0}')^2+2\pi^4t\eqskip
& \ge &  \pi^2(t^2-2t-1)\dint_0^1(v_{0}')^2+2\pi^4t\eqskip
& \ge & \pi^4(t^2-1).
\end{array}
\]
Hence $F'(t)>0$ for $t>1$, and the result now follows by observing that $L(a)=F(a^4)-2\pi^4$.
\end{proof}

Our strategy is to find bounds for $a\in[1,+\infty)$ such that
\begin{equation}
\label{firstattempt}
\Lambda<L(a).
\end{equation}
In fact, if $\hat{a}$ is a solution of $\Lambda=L(a)$, then we obtain that the solution of
\begin{equation}
\label{miniproblem}
\min_{a\ge 1} \lambda_1(a)
\end{equation}
has to be a rectangle $R_a$ with $a\in[1,\hat{a})$, the precision of this bound being strictly related to the
precision of the bounds $\Lambda$ and $L(a)$. We remark that there
exists at least one solution to problem~\eqref{miniproblem}, since $\lim_{a\to\infty}\lambda_1(a)=\infty$.

In order to find the smallest value $\hat a$ satisfying \eqref{firstattempt}, we implement a bisection procedure in the software
Mathematica\texttrademark\ starting from $L(2)=9442.68$.

\begin{thm}
Problem \eqref{miniproblem} admits (at least) one minimiser $a^*$ in the interval
$$
a^*\in [1,\hat a),
$$
where $\hat a$ is the solution of equation $\Lambda=L(a)$, lying in the interval
$$
\hat a \in [1.03269,1.032695).
$$
\end{thm}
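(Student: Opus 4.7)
The plan is to combine the three lemmas that precede the theorem. Existence of a minimiser has already been observed in the paragraph before the statement: by the lower bound $\lambda_1(a)\ge L(a)$ together with the monotonicity of $L$ (Lemma on $L$), one has $\lambda_1(a)\to\infty$ as $a\to\infty$; continuity of $\lambda_{1}(a)$ in $a$ reduces the minimisation to a compact interval, where the minimum is attained. So I would cite that observation and focus on the location of the minimiser.

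For the inclusion $a^{*}\in[1,\hat a)$, I would argue by exclusion. Let $\hat a$ be the (unique, by strict monotonicity) solution of $L(a)=\Lambda$ in $(1,\infty)$. For any $a\ge\hat a$, the monotonicity lemma gives $L(a)\ge L(\hat a)=\Lambda$, and hence by Owen's bound together with the upper bound on $\lambda_1(1)$ from the first lemma,
\[
\lambda_{1}(a)\;\ge\;L(a)\;\ge\;\Lambda\;\ge\;\lambda_{1}(1).
\]
Since a global minimiser $a^{*}$ must satisfy $\lambda_{1}(a^{*})\le\lambda_{1}(1)$, and since $\lambda_{1}(1)\ge 1294.933940>L(a)$ is false once $a>\hat a$ (strictly), this forces $a^{*}\in[1,\hat a)$.

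It remains to bracket $\hat a$ numerically. Since $L$ is continuous and strictly increasing on $[1,\infty)$, and $L(2)=9442.68\gg\Lambda$ while $L(1)=2\rho(\pi^{2})-2\pi^{4}$ is easily checked to be below $\Lambda$, a bisection procedure applied to $L(a)-\Lambda$ will converge to $\hat a$. Each evaluation of $L(a)$ reduces to computing $\rho(\pi^{2}a^{4})$ and $\rho(\pi^{2}a^{-4})$, i.e.\ the first Dirichlet eigenvalue of the one-dimensional fourth-order problem \eqref{prob1d} for two values of the parameter $\alpha$; these can be computed to high precision by a standard Galerkin or shooting routine in Mathematica. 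I would then simply evaluate $L$ at the two candidate endpoints $1.03269$ and $1.032695$ and verify
\[
L(1.03269)<\Lambda<L(1.032695),
\]
which, combined with monotonicity, pins $\hat a$ into the claimed interval.

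The main obstacle is therefore not conceptual but numerical: one must evaluate $\rho(\pi^{2}a^{\pm 4})$ with enough accuracy that the inequalities bracketing $\Lambda$ are certified rather than merely plausible, since $\Lambda$ itself is known only to about eight decimal digits. In practice this is not an issue because $L$ grows rapidly near $\hat a$ (the derivative computed in the previous lemma is of order $\pi^{4}(t^{2}-1)/t^{2}$ at $t=a^{4}$, which is already substantial at $a\approx 1.03$), so the required precision on $\rho$ is modest.
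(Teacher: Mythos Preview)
Your proposal is correct and is essentially the same argument as the paper's: the paper does not give a separate proof of this theorem, but rather relies on the discussion preceding it (existence from $\lambda_1(a)\to\infty$, localisation from $\lambda_1(a)\ge L(a)\ge\Lambda\ge\lambda_1(1)$ for $a\ge\hat a$) together with a numerical bisection in Mathematica to bracket $\hat a$. Your write-up simply makes the logic explicit, and your remark about the required numerical accuracy is a welcome addition that the paper omits.
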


We observe that this method allows us to say that the minimiser has to be very close to the square, but we cannot go any further below the threshold
$\hat a$. Having some additional information such as convexity of the first eigenvalue with respect to this perturbation, or simplicity of
eigenvalues for rectangles close to the square, would allow for a more complete result. Some numerical simulations based on the method
of fundamental solutions~\cite{alan}, however, give support to the conjecture that the square is the actual global minimizer among all rectangles
of unit area (see Fig.~\ref{figura}).
We also note that, even though the general form of the 
shape derivative for eigenvalues of problem \eqref{dirichlet} is known (see e.g., \cite{buo,buolam}), its value is extremely difficult to estimate for 
the square and for rectangles in general, since, in contrast with the Dirichlet Laplacian and as mentioned above, the explicit
form of the eigenfunctions is not known.

\begin{figure}[ht]
\centering
\includegraphics[width=1.\textwidth]{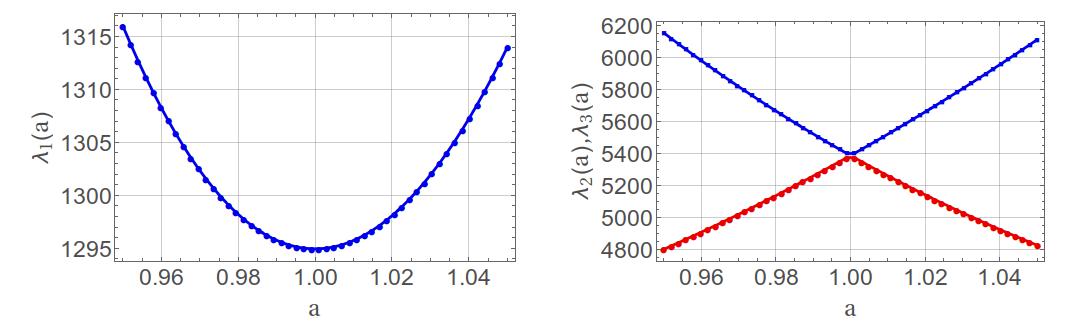}
\caption{On the left, $\lambda_1(a)$ for rectangles with different sides around a square ($a=1)$; on the right, the same for $\lambda_2,\lambda_3$.}
\label{figura}
\end{figure}

Since the bounds obtained by the above methods are not explicit and still require the solution of a transcendental equation in each case,
we conclude this section with a simple bound which, although not as accurate, has the advantage that it only requires the determination of one
such root.

\begin{thm}
 The first eigenvalue of problem~\eqref{dirichlet} on a rectangle $R_{a}$ satisfies the bound
 \[
  \lambda_{1}(a) \geq \omega_{1}^4 \left( a^4 + a^{-4}\right) + 2\pi^4,
 \]
where $\omega_{1}\approx 4.73004$ is the first positive root of the equation $\cos(\omega)\cosh(\omega) = 1.$
\end{thm}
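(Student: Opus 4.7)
The plan is to derive this bound as a clean consequence of Owen's estimate~\eqref{La}, using only the variational characterisation of $\rho$ and the Poincar\'e inequality — essentially the same ingredients that appeared in the proof of Lemma~3.3, but now applied directly rather than differentially.

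First, I would observe that $\rho(0)$ is exactly the first eigenvalue of the classical clamped beam problem $y''''=\lambda y$ on $(0,1)$ with $y(0)=y(1)=y'(0)=y'(1)=0$. This eigenvalue equals $\omega_1^4$, where $\omega_1$ is the first positive root of $\cos(\omega)\cosh(\omega)=1$. This identification is classical and follows by writing the general solution to $y''''=\omega^4 y$ as a combination of $\sin(\omega x)$, $\cos(\omega x)$, $\sinh(\omega x)$, $\cosh(\omega x)$ and imposing the four clamped boundary conditions, whose determinant reduces to $2(\cos(\omega)\cosh(\omega)-1)$.

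Next, I would use the variational characterisation of $\rho(\pi^2 t)$ already recorded in the proof of Lemma~3.3, namely
\[
\rho(\pi^2 t)=\min_{\substack{v\in H^2_0(0,1)\\ \|v\|_{L^2}=1}}\left[\int_0^1 (v'')^2 + 2\pi^2 t\int_0^1(v')^2\right].
\]
For any admissible $v$ the two terms can be bounded independently: $\int_0^1(v'')^2\ge \rho(0)=\omega_1^4$ by the variational definition of $\rho(0)$, and $\int_0^1(v')^2\ge \pi^2$ by the Poincar\'e inequality on $H^1_0(0,1)$ together with $\|v\|_{L^2}=1$. Taking both bounds on the minimiser yields
\[
\rho(\pi^2 t)\ge \omega_1^4+2\pi^4 t\qquad\text{for every }t\ge 0.
\]

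Finally, plug this into Owen's bound \eqref{La} with $t=a^4$ and $t=a^{-4}$:
\[
\lambda_1(a)\ge \rho(\pi^2 a^4)a^{-4}+\rho(\pi^2 a^{-4})a^4-2\pi^4
\ge \bigl(\omega_1^4 a^{-4}+2\pi^4\bigr)+\bigl(\omega_1^4 a^4+2\pi^4\bigr)-2\pi^4,
\]
and the $2\pi^4$ terms collapse to give precisely $\omega_1^4(a^4+a^{-4})+2\pi^4$. No step is really an obstacle; the only thing one has to be careful about is that the two lower bounds on $\int_0^1(v'')^2$ and $\int_0^1(v')^2$ are applied to the same function (the minimiser at parameter $t$), which is legitimate since each is a pointwise inequality on the functional regardless of which $v$ is used.
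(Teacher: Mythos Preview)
Your argument is correct, but the paper proves this theorem by a different and more self-contained route. Rather than passing through Owen's one-dimensional bound~\eqref{La}, the paper works directly with the two-dimensional Rayleigh quotient: it uses the identity $\int_{R_a}(\Delta u)^2=\int_{R_a}u_{xx}^2+u_{yy}^2+2u_{xy}^2$ (valid on $H^2_0$), then splits the infimum into two pieces. The first piece, $\inf\int(u_{xx}^2+u_{yy}^2)/\int u^2$, is the first eigenvalue of the separable operator $\partial_{xxxx}+\partial_{yyyy}$ with clamped boundary conditions, which equals $\omega_1^4(a^4+a^{-4})$ via tensor products of one-dimensional clamped-beam eigenfunctions. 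The second piece, $2\inf\int u_{xy}^2/\int u^2$, is bounded below by $2\pi^4$ by applying the Poincar\'e inequality once in $x$ and once in $y$.

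Your approach is shorter and, within the paper, economical: Owen's estimate is already stated as Lemma~3.2, so you only need the elementary lower bound $\rho(\pi^2 t)\ge\omega_1^4+2\pi^4 t$. The paper's approach, on the other hand, does not rely on Owen's result at all, and makes the provenance of each term explicit --- the $\omega_1^4(a^4+a^{-4})$ from the pure fourth-order part and the $2\pi^4$ from the mixed term. The two arguments are morally the same decomposition, just carried out at different stages: Owen's bound itself is proved by an analogous splitting, so your proof can be viewed as factoring through that lemma rather than repeating its mechanism in two dimensions.
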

\begin{proof}
 On a rectangle $R_{a}$ we have
 \[
 \begin{array}{lll}
  \dint_{R_{a}} \left( \Delta u\right)^2 & = & \dint_{R_{a}} u_{xx}^2 + u_{yy}^2 + 2 u_{xx}u_{yy} \:\:{\rm d}x{\rm d}y\eqskip
  & = & \dint_{R_{a}} u_{xx}^2 + u_{yy}^2 + 2 u_{xy}^2 \:\:{\rm d}x{\rm d}y.
 \end{array}
\]
We thus have
\[
 \begin{array}{lll}
  \lambda_{1}(a) & = &  {\ds \inf_{0\neq u\in H_{0}^{2}(R_{a})}} \fr{ \dint_{R_{a}} \left( \Delta u\right)^2 }{\dint_{R_{a}} u^2}\eqskip
  & \geq &  {\ds \inf_{0\neq u\in H_{0}^{2}(R_{a})}}\fr{ \dint_{R_{a}} u_{xx}^2 + u_{yy}^2 }{\dint_{R_{a}} u^2}  +
  2  {\ds \inf_{0\neq u\in H_{0}^{2}(R_{a})}} \fr{ \dint_{R_{a}} u_{xx}u_{yy}}{\dint_{R_{a}} u^2}.
 \end{array}
\]
The first term corresponds to the first eigenvalue of the problem
\[
 u_{xxxx}+u_{yyyy} =\gamma u
\]
in $R_{a}$, together with Dirichlet boundary condtions. Note that the operator $\partial_{xxxx}+\partial_{yyyy}$ is strongly
elliptic, and therefore admits a purely discrete specrtum accumulating at infinity, with the usual minimax characterization for
its eigenvalues (see also \cite[Theorem 9, page 176]{burenkov}). For this problem it is possible to separate variables in the
usual way to obtain
\[
 \gamma_{1}(a) = \omega_{1}^4 (a^4 + a^{-4}).
\]
Regarding the second term, we have
\[
 \begin{array}{lll}
 \dint_{R_{a}} u_{xy}^2 \:\:{\rm d}x{\rm d}y & = & \dint_{0}^{a^{-1}}\dint_{0}^{a} \left[ \left( u_{y}\right)_{x}\right]^2 \:\:{\rm d}x{\rm d}y\eqskip
  & \geq & \dint_{0}^{a^{-1}}\fr{\pi^2}{a^2} \dint_{0}^{a} u_{y}^2 \:\:{\rm d}x{\rm d}y\eqskip
  & \geq & \pi^4 \dint_{0}^{a^{-1}} \dint_{0}^{a} u^2 \:\:{\rm d}x{\rm d}y\eqskip
  & = & \pi^4 \dint_{R_{a}} u^2 \:\:{\rm d}x{\rm d}y,
 \end{array}
\]
and putting these two estimates together yields the result.
\end{proof}

\section{High-frequency limit: minimising rectangles converge to the square}

In this section we are interested in what happens to the minimiser of $\lambda_k$ as $k\to\infty$. To simplify notation, in what follows we write $\lambda_k(a)$
for $\lambda_k(R_a)$.  As in~\cite{antfre}, we start with a lower bound for $\lambda_k(a)$ which, for the case of rectangles, improves upon what would be
obtained by a direct application of P\'{o}lya's bound for tiling domains.

\begin{lem}
\label{lemmacit}
For any $a\ge 1$ we have
\begin{equation}
\label{ineq}
\lambda_k^{\frac 1 2}(a)\ge 4\pi k +2 a \lambda_k^{\frac 1 4}(a)-\frac{4 \sqrt{2\pi}}{3\sqrt{3}}a^{\frac 3 2}\lambda_k^{\frac 1 8}(a).
\end{equation}
\end{lem}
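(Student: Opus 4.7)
The inequality is the biharmonic counterpart of the refined P\'olya-type lower bound for the Dirichlet Laplacian on rectangles obtained in~\cite{antfre}, and my plan is to reduce it to precisely that estimate. The reduction proceeds in two parts: (i) pass from the biharmonic operator to the Dirichlet Laplacian by a min-max/separation-of-variables argument, and (ii) invoke an Antunes--Freitas-type one-dimensional lattice-point bound on the quarter-ellipse.

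\emph{Reduction to the Laplacian.} Let $\mu_k(R_a)$ denote the $k$-th Dirichlet eigenvalue of $-\Delta$ on $R_a$. The inclusion $H^2_0(R_a)\subset H^2(R_a)\cap H^1_0(R_a)$ and the min-max principle yield $\lambda_k(R_a)\ge\tilde\lambda_k(R_a)$, where $\tilde\lambda_k$ is the $k$-th eigenvalue of $\Delta^2$ under Navier boundary conditions $u=\Delta u=0$ on $\partial R_a$. On a rectangle the Navier problem separates variables via the products $\sin(m\pi x/a)\sin(n\pi a y)$, giving $\tilde\lambda_k(R_a)=\mu_k(R_a)^2$ with $\mu_{m,n}=\pi^2(m^2/a^2+n^2 a^2)$. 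Consequently $\sqrt{\lambda_k(R_a)}\ge\mu_k(R_a)$, equivalently
\[
N_{\mathrm{bih}}(\Lambda;R_a)\ \le\ N_{\mathrm{Lap}}\bigl(\sqrt{\Lambda};R_a\bigr)\qquad\text{for every }\Lambda>0,
\]
where $N_{\mathrm{bih}}$ and $N_{\mathrm{Lap}}$ are the respective counting functions.

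\emph{Lattice-point estimate.} The right-hand side counts positive-integer lattice points $(m,n)$ inside the quarter-ellipse $E_\nu=\{(x,y)\ge 0\colon \pi^2(x^2/a^2+y^2 a^2)\le\nu\}$, of area $\nu/(4\pi)$. As in~\cite{antfre}, I attach to each such lattice point the unit square $[m-1,m]\times[n-1,n]$, which lies in $E_\nu$ by monotonicity. These squares are pairwise disjoint, so their total area $N_{\mathrm{Lap}}(\nu;R_a)$ is bounded above by $|E_\nu|$ minus the measure $|S_\nu|$ of the strip left uncovered. Slicing vertically, the column $x\in[m-1,m]$ contributes an uncovered height $\nu(x)-\lfloor\nu(m)\rfloor$ with $\nu(x)=(\pi a)^{-1}\sqrt{\nu-\pi^2x^2/a^2}$; summing over the $M\approx a\sqrt{\nu}/\pi$ columns plus the tail on $[M,a\sqrt{\nu}/\pi]$ and lower-bounding using the concavity of $\nu(x)$ together with the accumulation of the fractional parts $\{\nu(m)\}$ should deliver
\[
N_{\mathrm{Lap}}(\nu;R_a)\ \le\ \frac{\nu}{4\pi}-\frac{a}{2\pi}\sqrt{\nu}+\frac{\sqrt{2\pi}}{3\sqrt{3}\,\pi}\,a^{3/2}\nu^{1/4}.
\]
Substituting $\nu=\sqrt{\lambda_k(a)}$ into this, composing with the reduction above, using $k\le N_{\mathrm{bih}}(\lambda_k(a);R_a)$, and multiplying through by $4\pi$ rearranges exactly to~\eqref{ineq}.

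The reduction in the first part is elementary; the genuine technical content lies in the lattice-point bound. The linear term $\tfrac{a}{2\pi}\sqrt{\nu}$ should come from summing the fractional parts $\{\nu(m)\}$ across the $M\sim a\sqrt{\nu}/\pi$ columns, but the explicit constant $4\sqrt{2\pi}/(3\sqrt{3})$ in the correction term requires a careful optimisation of the cut-off between an integral/Riemann-sum comparison and a trivial bound on the short interval near $x=a\sqrt{\nu}/\pi$, where $\nu(x)$ degenerates like a square root. Matching all constants simultaneously is the main obstacle, and is exactly where I expect the proof to spend its energy, the rest being bookkeeping.
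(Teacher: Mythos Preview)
Your reduction is exactly the paper's: compare Dirichlet-biharmonic with Navier via $H^2_0\subset H^2\cap H^1_0$ to obtain $\lambda_k(a)\ge\bigl(\lambda_k^D(a)\bigr)^2$, then feed this into the Antunes--Freitas bound for $\lambda_k^D$. The only substantive difference is packaging. The paper simply \emph{cites} \cite[Theorem~3.1]{antfre} in its eigenvalue form
\[
\lambda_k^D(a)\ge 4\pi k + 2a\bigl(\lambda_k^D(a)\bigr)^{1/2}-\tfrac{4\sqrt{2\pi}}{3\sqrt{3}}a^{3/2}\bigl(\lambda_k^D(a)\bigr)^{1/4},
\]
then pushes $\lambda_k^D(a)$ up to $\lambda_k^{1/2}(a)$ by rewriting the inequality as $t-2a\sqrt{t}\ge\text{(RHS)}$ and using that $t\mapsto t-2a\sqrt{t}$ is increasing for $t\ge a^2$ together with $\lambda_k^{1/2}(a)\ge\lambda_k^D(a)\ge a^2$. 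You instead phrase everything through counting functions, $N_{\mathrm{bih}}(\Lambda)\le N_{\mathrm{Lap}}(\sqrt{\Lambda})$, and substitute $\nu=\lambda_k^{1/2}(a)$ directly into the lattice bound; this is arguably cleaner since it bypasses the monotonicity manoeuvre. On the other hand, you spend most of the proposal sketching a re-derivation of the lattice-point inequality itself, which is unnecessary: that is precisely the content of \cite[Theorem~3.1]{antfre}, and the paper treats it as a black box. Your outline of that derivation is broadly right but, as you note, incomplete on the constants; simply citing \cite{antfre} closes the argument immediately.
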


\begin{proof}
From \cite[Theorem 3.1]{antfre} we have
\begin{equation}
\label{original}
\lambda_k^D(a)\ge 4\pi k +2 a \left[\lambda_k^D(a)\right]^{\frac 1 2}-\frac{4 \sqrt{2\pi}}{3\sqrt{3}}a^{\frac 3 2}\left[\lambda_k^D(a)\right]^{\frac 1 4},
\end{equation}
where $\lambda_k^D(a)$ is the $k$-th eigenvalue of the Dirichlet problem
\begin{equation*}
\left\{\begin{array}{ll}
-\Delta u=\lambda u, & {\rm \ in\ } R_a,\\
u=0, & {\rm \ on\ }\partial R_a.
\end{array}\right.
\end{equation*}

At this point we observe that
\begin{equation}\label{dirnav}
\lambda_k(a)\ge\left[\lambda_k^D(a)\right]^2,
\end{equation}
since $\left[\lambda_k^D(a)\right]^2$ is the $k$-th eigenvalue of the Navier problem
\begin{equation}
\label{navierpb}
\left\{\begin{array}{ll}
\Delta^2u=\lambda u, & {\rm \ in\ }R_a,\\
u=\Delta u=0, & {\rm \ on\ }\partial R_a,
\end{array}\right.
\end{equation}
and may be characterized as
\begin{equation*}
\left[\lambda_k^D(a)\right]^2=\min_{0\neq u\in V\subset H^2(R_a)\cap H^1_0(R_a)}\max_{\dim V=k}\frac{\int_{R_a} (\Delta u)^2}{\int_{R_a} u^2}.
\end{equation*}
See also \cite[Chapter 2.7]{ggs} for a discussion about the coercivity of problem \eqref{navierpb}.

We rewrite~\eqref{original} as
\begin{equation*}
\lambda_k^D(a)-2 a \left[\lambda_k^D(a)\right]^{\frac 1 2}\ge 4\pi k -\frac{4 \sqrt{2\pi}}{3\sqrt{3}}a^{\frac 3 2}\left[\lambda_k^D(a)\right]^{\frac 1 4}.
\end{equation*}
Using the fact that $t-2a\sqrt{t}$ is increasing in $t$ for $t\ge a^2$ and that $\lambda_k(a)^{\frac 1 2}\ge \lambda_k^D(a)\ge a^2$, we obtain inequality~\eqref{ineq}.
\end{proof}

Let us now set
\begin{equation*}
\lambda_k^*=\min_{a\ge 1}\lambda_k(a).
\end{equation*}
It is clear that the minimum is achieved since $\lambda_k(a)\to \infty$ as $a\to\infty$. We also set $a_k^*$ in such a way that
$$
\lambda_k(a_k^*)=\lambda_k^*.
$$
We remark that, in line with what happens for the Dirichlet Laplacian~\cite{antfre}, $a_k^*$ does not have to be uniquely
defined as a function of $k$. although it would probably be extremely difficult to prove so in this case; however, we can always
choose one particular value for each $k\in\mathbb N$.

Then we have
\begin{thm}
The sequence of optimal rectangular shapes for $\lambda_k$ converges to the square as $k\to\infty$, i.e.,
\begin{equation}
\label{limit}
\lim_{k\to\infty}a_k^*=1.
\end{equation}
\end{thm}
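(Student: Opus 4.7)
The plan is to combine Lemma 4.1 with the Weyl asymptotic (\ref{weyl22}) and the trivial minimality inequality $\lambda_k(a_k^*)\leq\lambda_k(1)$ afforded by the fact that the square is admissible.

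First I would establish that $\{a_k^*\}$ is bounded. The Weyl asymptotic for the unit square reads
\[
\lambda_k^{1/2}(1)=4\pi k+8\sqrt{\pi}\,(1+C_0)\,k^{1/2}+\so(k^{1/2}),
\]
where $C_0=\Gamma(3/4)/[\sqrt{\pi}\,\Gamma(5/4)]$. Squeezing $\lambda_k(a_k^*)$ between the trivial upper bound $\lambda_k(a_k^*)\leq\lambda_k(1)\sim 16\pi^2 k^2$ and the lower bound from Lemma 4.1 shows first that $\lambda_k^{1/4}(a_k^*)=2\sqrt{\pi k}\,(1+\so(1))$, and then substituting this back into Lemma 4.1 gives
\[
4\pi k+4 a_k^*\sqrt{\pi k}\,(1+\so(1))\leq 4\pi k+8\sqrt{\pi}\,(1+C_0)\,k^{1/2}+\so(k^{1/2}),
\]
so $a_k^*\leq 2(1+C_0)+\so(1)$. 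In particular the sequence is bounded.

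Next, the key step is to apply the Weyl expansion (\ref{weyl22}) at the rectangle $R_{a_k^*}$ itself. Since $|\partial R_a|=2(a+1/a)$, squaring the expansion gives
\[
\lambda_k(a_k^*)=16\pi^2 k^2+32\pi^{3/2}(a_k^*+1/a_k^*)(1+C_0)\,k^{3/2}+\so(k^{3/2}),
\]
while $\lambda_k(1)=16\pi^2 k^2+64\pi^{3/2}(1+C_0)\,k^{3/2}+\so(k^{3/2})$. Inserting both into $\lambda_k(a_k^*)\leq\lambda_k(1)$, the $k^2$-terms cancel and matching the $k^{3/2}$-coefficients yields
\[
(a_k^*+1/a_k^*)(1+C_0)\leq 2(1+C_0)+\so(1),
\]
i.e.\ $a_k^*+1/a_k^*\leq 2+\so(1)$. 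Because $a+1/a\geq 2$ for every $a\geq 1$, with equality only at $a=1$, this forces $a_k^*\to 1$.

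The main obstacle is that the remainder $\so(k^{3/2})$ in (\ref{weyl22}) is a priori stated for a single fixed domain, whereas the argument applies it to the sequence of varying rectangles $R_{a_k^*}$. What one really needs is a version of (\ref{weyl22}) with a remainder that is uniform in $a$ over bounded subsets of $[1,\infty)$. For rectangles this is plausible since the Navier biharmonic problem on $R_a$ is solvable by separation of variables, reducing the counting function for $(\lambda_k^D(a))^2$ to a lattice-point count in an ellipse whose remainder can be controlled uniformly in $a$; one then needs to bring the Dirichlet biharmonic into play while keeping track of the $(1+C_0)$-correction that distinguishes it from the Navier case. Alternatively one can argue by contradiction, extract a subsequence $a_{k_j}^*\to a_\infty>1$ and apply (\ref{weyl22}) to the fixed rectangle $R_{a_\infty}$, but then the chief difficulty is to transfer the expansion back to $R_{a_{k_j}^*}$: the natural affine test-function comparison only gives an error of order $O(|a_{k_j}^*-a_\infty|\,k_j^2)$, which may easily dominate the $k^{3/2}$-correction one needs to detect.
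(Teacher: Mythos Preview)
Your boundedness step mirrors the paper's first step. The gap you flag in the second step --- that applying~(\ref{weyl22}) at the \emph{moving} rectangle $R_{a_k^*}$ would require a Weyl remainder uniform in $a$ --- is genuine, and the subsequence workaround you sketch does fail for exactly the reason you give (the affine comparison error is of the wrong order).

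The paper avoids the difficulty by never invoking Weyl at the moving rectangle. Once boundedness of $\{a_k^*\}$ is known, it imports the sharper lattice-point inequality~\cite[formula~(3.6)]{antfre} for the Dirichlet Laplacian on $R_a$ and transfers it to the biharmonic via $\lambda_k(a)\ge[\lambda_k^D(a)]^2$ (just as in the proof of Lemma~\ref{lemmacit}), obtaining
\[
\lambda_k^{1/2}(a)\;\ge\;4\pi k + 2\Bigl(a+\frac{1}{a}\Bigr)\lambda_k^{1/4}(a) - C\,\lambda_k^{\theta/4}(a)-3\pi,\qquad \theta\in(0,1),
\]
for every $a$ in a bounded range. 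This is a non-asymptotic inequality, valid rectangle by rectangle, so no uniformity of any Weyl remainder is needed; the full perimeter factor $a+1/a$ appears because the Laplacian eigenvalues on $R_a$ are explicit and the counting-function error is handled by elementary lattice-point estimates that are uniform in $a$ on bounded sets. Comparing this lower bound at $a=a_k^*$ with $\lambda_k(a_k^*)\le\lambda_k(1)$ and then following the concluding argument of~\cite[p.~8]{antfre} yields $a_k^*+1/a_k^*\to 2$. In short, the paper carries out precisely your first suggested workaround --- lattice-point control via the Navier comparison $\lambda_k\ge(\lambda_k^D)^2$ --- rather than attempting to control the Dirichlet-biharmonic counting function directly; this is what makes the $(1+C_0)$-issue you raise disappear from the lower-bound side of the comparison.
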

As we just noticed, uniqueness of the optimizer may fail for some $k$; nevertheless, for any possible choice, the limit \eqref{limit} holds.

\begin{proof}
First of all, following the argument used in \cite[Theorem 3.5]{antfre} coupled with the Weyl asymptotic expansion \eqref{weyl22} and Lemma \ref{lemmacit}, we get that
\begin{equation*}
\limsup_{k\to\infty}a_k^*\leq \frac{6\sqrt{3}}{3\sqrt{3}-2\sqrt{2}}\left[1+\frac{\Gamma\left(\frac34\right)}{\sqrt{\pi}\Gamma\left(\frac54\right)}\right],
\end{equation*}
meaning that the sequence $\{a_k^*\}_k$ is bounded.
At this point, using the boundedness of $\{a_k^*\}_k$ and the fact that
$$
\lambda_k^{\frac 1 2}(a)\ge \lambda_k^D(a)\ge\pi^2\left(a^2+\frac{1}{a^2}\right),
$$
and that $t\to t-2\left(a+\fr 1 a\right)\sqrt{t}$ is increasing for $t\ge\pi^2\left(a^2+\fr{1}{a^2}\right)$, from \cite[formula (3.6)]{antfre} we deduce that
\begin{equation*}
\lambda_k^{\frac 1 2}(a)\ge 4\pi k +2 \left(a+\frac 1 a \right) \lambda_k^{\frac 1 4}(a)-C\lambda_k^{\frac \theta 4}(a)-3\pi,
\end{equation*}
for some $\theta\in(0,1)$. 
We thus deduce an inequality of the type of~\cite[inequality (3.7)]{antfre} and, following the same argument as in~\cite[p.~8]{antfre}, we obtain the
deired result.
\end{proof}

\section{Further results\label{collection}}

For completeness, we now collect some results whose proofs are similar to their Laplacian counterparts.


\subsection{Perimeter constraint}
The first of these corresponds to the minimisation of the $k^{\rm th}$ eigenvalue under a perimeter restriction. More precisely, let
$$
\lambda_k^*=\min\{ \lambda_k(\Omega): \Omega\in\mathcal{R}, |\partial\Omega|=\alpha\},
$$
for some fixed value $\alpha>0$, where $\mathcal{R}$ is a family of bounded domains in $\mathbb R^2$. Let also $\Omega_k^*\in\mathcal{R}$ be a
minimiser for $\lambda_k$, i.e.,
$$\lambda_k^*=\lambda_k(\Omega_k^*).$$
We have the following
\begin{thm}\label{general} Let $\alpha>0$ be fixed.
\begin{itemize}
\item[i)] Let $\mathcal{D}$ be the class of open domains in $\mathbb{R}^2$. Then the sequence of optimal domains $\Omega_k^*$ converges to the disk with perimeter $\alpha$.

\item[ii)] Let $\mathcal{P}_{n}$ be the class of polygons having exactly $n$ sides in $\mathbb{R}^2$. Then the sequence of optimal domains $\Omega_k^*$ converges to the regular $n$-gon with perimeter $\alpha$.

\item[iii)] Let $\mathcal{T}$ be the class of tiling domains in $\mathbb{R}^2$. Then the sequence of optimal domains $\Omega_k^*$ converges to the regular hexagon with perimeter $\alpha$.
\end{itemize}
\end{thm}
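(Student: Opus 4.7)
All three statements will be obtained by direct adaptation of the corresponding perimeter-constrained theorems for the Dirichlet Laplacian: part (i) mirrors \cite{bucfre}, while parts (ii) and (iii) follow the polygonal and tiling refinements of \cite{fre,lars}. The enabling observation is that the leading term of the two-term Weyl expansion \eqref{weyl22} for $\lambda_k^{1/2}$, namely $4\pi k/|\Omega|$, coincides in form with the leading term of the Weyl expansion for the Dirichlet Laplace eigenvalue $\lambda_k^D$ itself. Consequently, asymptotic minimisation of $\lambda_k$ under the constraint $|\partial\Omega|=\alpha$ is driven at leading order by maximisation of $|\Omega|$, a purely geometric isoperimetric problem whose unique solution within each of the three admissible classes is, respectively, the disk, the regular $n$-gon, and the regular hexagon (the last by Hales' honeycomb theorem).

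In each case the argument proceeds in three steps. First, apply \eqref{weyl22} to the putative extremal shape $\Omega^\dagger$ to obtain the upper bound
\[
\lambda_k(\Omega_k^*) \leq \lambda_k(\Omega^\dagger) = \frac{16\pi^2 k^2}{|\Omega^\dagger|^2} + O(k^{3/2}).
\]
Second, establish a matching leading-order lower bound for $\lambda_k(\Omega_k^*)$: in the tiling case (iii) this is obtained directly by a P\'olya-type inequality analogous to the one underlying Lemma \ref{lemmacit}, combined with the tiling hypothesis and a reflection argument; in cases (i) and (ii) one uses the universal Kuttler--Sigillito bound \eqref{kukutang} together with the control on the diameter of $\Omega_k^*$ coming from the perimeter constraint, arguing as in \cite{bucfre}. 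Third, letting $k\to\infty$, the upper and lower bounds force $|\Omega_k^*|\to|\Omega^\dagger|$, and since $\Omega^\dagger$ uniquely maximises area among shapes of perimeter $\alpha$ within its class, a standard Hausdorff-compactness argument then yields $\Omega_k^*\to\Omega^\dagger$ in the corresponding topology.

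The main technical obstacle lies in the mismatch between the sharp leading Weyl constant $16\pi^2$ appearing in the upper bound and the weaker constant $16\pi^2/3$ in \eqref{kukutang}: a naive comparison only gives $|\Omega_k^*|\ge |\Omega^\dagger|/\sqrt{3}$, which is insufficient. Closing this gap in cases (i) and (ii) is precisely where the compactness-plus-stability scheme of \cite{bucfre} for domains of controlled perimeter has to be invoked non-trivially, rather than merely transcribed by matching leading constants. For the tiling case this difficulty does not arise, as the P\'olya-type inequality can be arranged to carry the sharp leading constant.
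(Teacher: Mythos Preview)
Your outline matches the paper's own approach: adapt \cite{bucfre} using the Weyl law \eqref{weyl2} together with the lower bound \eqref{kukutang}. A few points should be tightened. The paper explicitly lists the Navier comparison \eqref{dirnav}, $\lambda_k(\Omega)\ge[\lambda_k^D(\Omega)]^2$, as a third ingredient, and this is the clean route to the sharp-constant P\'olya inequality in the tiling case: combine \eqref{dirnav} with P\'olya's theorem for the Dirichlet Laplacian on tiling domains to get $\lambda_k(\Omega)\ge 16\pi^2 k^2/|\Omega|^2$. A direct ``reflection argument'' for the bilaplacian does not work, since reflecting a clamped-plate eigenfunction across a face where $u=\partial_\nu u=0$ produces only a $C^1$ extension, not a biharmonic one, so P\'olya's tiling proof does not transfer straight to $\Delta^2$. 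Also, \eqref{kukutang} is the Li--Yau-type bound of Ku--Ku--Tang, not Kuttler--Sigillito; and the references \cite{fre,lars} treat eigenvalue averages and Riesz means rather than the polygonal or tiling refinements---all three parts here go through by adapting \cite{bucfre} alone.
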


The proof of Theorem \ref{general} goes along the same lines of the correponding results in~\cite{bucfre}, now using
the first term in the Weyl asymptotics~\eqref{weyl2}, and inequalities~\eqref{kukutang} and~\eqref{dirnav}. We also note that this result can
be extended to a general polyharmonic problem of the form
\begin{equation}\label{polyharmonic}
\left\{\begin{array}{ll}
(-\Delta)^mu=\lambda u, & {\rm \ in\ }\Omega,\\
u=\frac{\partial u}{\partial \nu}=\dots=\frac{\partial^{m-1} u}{\partial \nu^{m-1}}=0, & {\rm \ on\ }\partial \Omega,
\end{array}\right.
\end{equation}
for $m\ge 1$, as formulas \eqref{weyl2} and \eqref{kukutang} can be generalized to this case as well.


\subsection{Subadditivity}
Let us now set
$$
\lambda_k^*=\min\{\lambda_k(\Omega):\Omega\in\mathbb R^N, |\Omega|=1\},
$$
for any $k$.
\begin{thm}\label{subadd}
Let $i_1\le i_1\le \dots\le i_p$ be positive integers such that $i_1+\dots+i_p=k$. Then
$$
(\lambda_k^*)^{\frac N 4}\le (\lambda_{i_1}^*)^{\frac N 4}+\dots+(\lambda_{i_p}^*)^{\frac N 4}.
$$
In particular,
$$
(\lambda_{k+1}^*)^{\frac N 4}-(\lambda_{k}^*)^{\frac N 4}\le (\lambda_{1}^*)^{\frac N 4}.
$$
\end{thm}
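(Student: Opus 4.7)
The plan is to adapt the classical Colbois--El Soufi disjoint-union construction to the biharmonic setting, exploiting the scaling law $\lambda_k(t\Omega)=t^{-4}\lambda_k(\Omega)$ which makes $|\Omega|^{4/N}\lambda_k(\Omega)$ scale-invariant; this is precisely the reason the exponent $N/4$ appears in the statement.

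Concretely, for each $j=1,\dots,p$ I would fix an (approximate) minimiser $\Omega_j$ of $\lambda_{i_j}$ over unit-volume sets, so that $|\Omega_j|=1$ and $\lambda_{i_j}(\Omega_j)=\lambda_{i_j}^{*}$ (working with infima and letting an auxiliary $\varepsilon$ go to zero at the end if existence of a true minimiser is not available). Set
\[
\lambda:=\Bigl(\textstyle\sum_{j=1}^{p}(\lambda_{i_j}^{*})^{N/4}\Bigr)^{4/N}
\]
and rescale by $t_j:=(\lambda_{i_j}^{*}/\lambda)^{1/4}$, so that $\widetilde\Omega_j:=t_j\Omega_j$ satisfies $\lambda_{i_j}(\widetilde\Omega_j)=\lambda$ by the scaling law. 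Since
\[
\textstyle\sum_{j=1}^{p} |\widetilde\Omega_j|=\sum_{j=1}^{p} t_j^{N}=\sum_{j=1}^{p}(\lambda_{i_j}^{*}/\lambda)^{N/4}=1,
\]
I can translate the $\widetilde\Omega_j$ so that they are pairwise disjoint in $\mathbb{R}^{N}$ and form the admissible (open, possibly disconnected) test set $\Omega^{*}:=\bigsqcup_{j}\widetilde\Omega_j$, which has $|\Omega^{*}|=1$.

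To conclude, I would use that the Dirichlet biharmonic problem decouples on disjoint components: the spectrum of $\Omega^{*}$ is the reordered union of the spectra of the $\widetilde\Omega_j$. Each $\widetilde\Omega_j$ contributes its first $i_j$ eigenvalues, all of which are $\le \lambda$, so the disjoint union has at least $i_1+\cdots+i_p=k$ eigenvalues not exceeding $\lambda$; therefore $\lambda_k(\Omega^{*})\le \lambda$. Since $|\Omega^{*}|=1$, the definition of $\lambda_k^{*}$ gives $(\lambda_k^{*})^{N/4}\le \lambda^{N/4}=\sum_{j}(\lambda_{i_j}^{*})^{N/4}$, which is the claim. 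The \emph{in particular} statement is the case $p=2$, $(i_1,i_2)=(k,1)$ (with a harmless reordering since here the indices need not be weakly increasing in the nontrivial sense).

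The argument is essentially routine and the only points that require care are (i) the admissibility of disconnected test domains in the class defining $\lambda_k^{*}$, which is standard, and (ii) the existence of minimisers in the definition of each $\lambda_{i_j}^{*}$. The latter is the only real technical issue, but it is circumvented by the usual $\varepsilon$-approximation: choose $\Omega_j$ with $\lambda_{i_j}(\Omega_j)\le \lambda_{i_j}^{*}+\varepsilon$, carry out the same construction with the perturbed $\lambda$, and let $\varepsilon\to 0^{+}$.
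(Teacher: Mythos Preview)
Your proposal is correct and is precisely the Colbois--El Soufi disjoint-union/rescaling argument that the paper itself invokes (the paper gives no independent proof but simply refers to \cite[Theorem~2.1]{colsou}). The scaling exponent, the volume bookkeeping, the decoupling on disjoint components, and the $\varepsilon$-approximation for possibly non-attained infima are all handled exactly as needed.
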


The proof of this result can be obtained following that of \cite[Theorem 2.1]{colsou}. We also have the following corollary thanks to Fekete's Lemma (cf.\ \cite{kukutan} for a general statement of the Generalized Polya conjecture).

\begin{cor}\label{gpc}
The following are equivalent.
\begin{itemize}
\item[i)] {\rm (Generalized Polya conjecture)} For any $k$ and for any domain in $\mathbb R^N$,
$$
\lambda_k\ge 16\pi^4\left(\fr{k}{\omega_N|\Omega|}\right)^{4/N}.
$$
\item[ii)] ${\ds \lim_{k\to\infty}}\fr{\lambda_k^*}{k^{4/N}}=\fr{16\pi^4}{\omega_N^{4/N}}$.
\end{itemize}
\end{cor}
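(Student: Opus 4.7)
The plan is to prove the two implications separately, reducing everything to the unit-volume case via the scaling relation $\lambda_k(t\Omega) = t^{-4}\lambda_k(\Omega)$ for the biharmonic operator. Under this scaling, (i) for a general $\Omega$ of volume $V$ is equivalent to the inequality $\lambda_k^\ast \geq 16\pi^4 (k/\omega_N)^{4/N}$ in the unit-volume class; so it suffices to work with $\lambda_k^\ast$ throughout.

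For (i) $\Rightarrow$ (ii): The lower bound is immediate from (i) applied to any unit-volume $\Omega$: dividing by $k^{4/N}$ yields $\liminf_k \lambda_k^\ast/k^{4/N} \geq 16\pi^4/\omega_N^{4/N}$. For the matching upper bound I would fix one admissible unit-volume reference domain (the unit ball, suitably rescaled) and invoke the $N$-dimensional Weyl asymptotics for the clamped-plate problem, which give $\lambda_k(\Omega_0)/k^{4/N} \to 16\pi^4/(\omega_N|\Omega_0|)^{4/N}$; since $\lambda_k^\ast \leq \lambda_k(\Omega_0)$, the limsup is controlled and (ii) follows.

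For (ii) $\Rightarrow$ (i): This is the interesting direction, and uses Theorem \ref{subadd} together with Fekete's Lemma. Set $a_k := (\lambda_k^\ast)^{N/4}$; Theorem \ref{subadd} specialised to $p=2$ gives $a_{m+n} \leq a_m + a_n$, so $(a_k)$ is a nonnegative subadditive sequence. Fekete's Lemma then provides the existence of $\lim_{k\to\infty} a_k/k = \inf_k a_k/k$. Under assumption (ii) this common value is exactly $(16\pi^4)^{N/4}/\omega_N$, so for every $k$ we have $a_k/k \geq (16\pi^4)^{N/4}/\omega_N$, which upon raising to the $4/N$ power is precisely (i) in the unit-volume class. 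Scaling back to arbitrary volume gives the full Generalized P\'{o}lya conjecture.

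I do not expect any single step to be a serious obstacle: the subadditivity is already packaged in Theorem \ref{subadd}, and Fekete's Lemma converts a limit statement into an infimum statement exactly as needed. The only care-requiring point is invoking the $N$-dimensional Weyl law with its correct leading constant $16\pi^4/(\omega_N|\Omega|)^{4/N}$, since the expansion \eqref{weyl2} reproduced in the paper is written only in the planar case; this is, however, classical and follows from the same sources (e.g.\ \cite{safvass}) used for \eqref{weyl2}.
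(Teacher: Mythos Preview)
Your proposal is correct and follows exactly the route the paper indicates: the paper's entire proof is the sentence ``thanks to Fekete's Lemma,'' and you have unpacked this by applying Fekete to the subadditive sequence $a_k=(\lambda_k^*)^{N/4}$ supplied by Theorem~\ref{subadd}, together with the Weyl leading term for the matching upper bound. The only point you rightly flag---that the $N$-dimensional Weyl constant $16\pi^4/(\omega_N|\Omega|)^{4/N}$ must be invoked rather than the planar expansion~\eqref{weyl2}---is indeed classical and covered by the same reference~\cite{safvass}.
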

We again observe that both Theorem~\ref{subadd} and Corollary~\ref{gpc} may also be stated for the polyharmonic
problem~\eqref{polyharmonic}.


\section*{Acknowledgements}

The authors would like to thank Pedro Antunes for providing the numerical results used to obtain Fig.\ \ref{figura},
and Richard S.\ Laugesen for numerous comments that resulted in an improved version of the manuscript. This work was partially
supported by the Funda\c c\~{a}o para a Ci\^{e}ncia e a Tecnologia (Portugal) through project {\it Extremal spectral quantities
and related problems} (PTDC/MAT-CAL/4334/2014). Most of the research in this paper was carried out while the first author held
a post-doctoral position at the University of Lisbon within the scope of this project. The first author is a member of the Gruppo
Nazionale per l'Analisi Matematica, la Probabilit\`a e le loro Applicazioni (GNAMPA) of the Istituto Naziona\-le di Alta Matematica (INdAM).


\end{document}